\newtheorem{theor}{theorem}[section]
\newtheorem{theorem}[theor]{Theorem}
\newtheorem{corollary}[theor]{Corollary}
\newtheorem{remark}[theor]{Remark}
\newtheorem{lemma}[theor]{Lemma}
\newcommand{\eop}[1]{{\flushright\hfill\fbox{\bf #1}}}
\newtheorem{theo}{theoreme}[section]
\newtheorem{proposition}[theo]{Proposition}
\newcommand{\Ric}{\operatorname{Ric}}
\newcommand{\Scal}{\operatorname{Scal}}
\newcommand{\trace}{\operatorname{trace}}
\title{On Horospherical Rigidity}
\author{G\'erard Besson}
\address{Institut Fourier\\ 
	Universit\'e Grenoble Alpes\\ 
	Institut Fourier\\
	100 rue des maths, 38610 Gi\`eres}
\email{g.besson@univ-grenoble-alpes.fr}
\author{Gilles Courtois}
\address{Department of Mathematics\\Paris VI\\4 place Jussieu, 75232 Paris C\'edex 09}
\email{gilles.courtois@imj-prg.fr}
\author{Sa'ar Hersonsky}
\address{Department of Mathematics\\ 
	University of Georgia\\ 
	Athens, GA 30602}
\email{saarh@uga.edu}
\thanks{}
\keywords{Scalar curvature, rigidity, horospheres, negativey curved manifolds}
\subjclass[2020]{53C21}
\date{\today}
\begin{document}
\maketitle

\section{Introduction}

This article concerns the question of how the geometry of the horospheres of a closed negatively curved manifold of dimension greater than $2$ determines the geometry of the whole manifold. 
Relations between the extrinsic geometry of the horospheres and the geometry of $M$ have already been considered. For instance, \cite{FoL} and \cite{BCG1} show that if all the horospheres have constant mean curvature, then the underlying manifold is locally symmetric (of negative curvature). Let us recall that the mean curvature of a hypersurface is related to the derivative of its volume element in the normal direction to the hypersurface, and hence the mean curvature is an extrinsic quantity. In contrast, we give in this work a characterization of the hyperbolic space in term of the an {\sl intrinsic} geometric property of the horospheres.

Before stating our main theorem, let us recall a few important features of the manifolds under consideration and results that are related to our work in this paper.  Let 
$(M, g)$ denote a $n$-dimensional, closed, Riemannian manifold  endowed with a metric of negative sectional curvature and of dimension $n\geq 3$. It follows from the Cartan-Hadamard theorem that  $\tilde M$, the universal cover of $M$ endowed with the metric $\tilde g$, is diffeomorphic to $\mathbf{R}^{n}$. 

Given a point, $\tilde m_0\in \widetilde M$, and a unit tangent vector, $\tilde v\in T_{\tilde m_0}\widetilde M$, we let  $c_{\tilde v}$  denote the unique geodesic ray determined by  $c_{\tilde v}(0)=\tilde m_0$ and $\dot c _{\tilde v}(0) =\tilde v$. It is well known that the map, $\tilde v \in T_{x_0} \widetilde M \mapsto [c_{\tilde  v}] \in \partial \tilde M$, defines a homeomorphism between the unit sphere in $T_{\tilde m_0} \widetilde M$ and $\partial \widetilde M$. Given a point $\xi = [c_{\tilde v}] \in \partial \widetilde M$, the Busemann function $B_\xi(\cdot)$ is then defined for all $\xi \in \partial \widetilde M$ and for all $\tilde m \in \widetilde M$, by $B_{\xi}(\tilde m)= \lim_{t\to \infty} (d(\tilde m,c_{\tilde v} (t)) - d(\tilde m_0, c_{\tilde v} (t)))$. 

Since $M$ is a closed negatively curved manifold, it is known that for each $\xi \in \partial \widetilde M$  the Busemann function $B_\xi(\cdot)$ is $C^\infty$-smooth.  Furthermore, for any $t\in\mathbf{R}$, the level set  $$H_{\xi}(t) =\left\{\tilde m\in {\widetilde M};\, B_\xi(\tilde m)= t\right\}$$ is a smooth submanifold of $\widetilde M$ which is diffeomorphic to $\mathbf{R}^{n}$ and which is  called a {\sl horosphere} centred at $\xi$. It follows that  horospheres inherit a complete Riemannian metric induced by the restriction of the metric of $\widetilde M$. For instance, if $(M,g)$ is a real hyperbolic manifold, every horosphere of $\widetilde M$ is flat and therefore isometric to the Euclidean space $\mathbf R^{n}$.

We defined horospheres as special submanifolds in $\widetilde M$ and let us give now a dynamical perspective. Let $\tilde p : T^1\widetilde M \rightarrow \widetilde M$ and $p : T^1M \rightarrow M$ denote  the natural projections. The geodesic flow on $T^1\widetilde M$ is known to be an Anosov flow, that is, the tangent bundle  $TT^1 \widetilde M$ admits a decomposition as $TT^1\widetilde M = \mathbf R X \oplus \widetilde E^{ss} \oplus \widetilde E^{su}$, where $X$ is the vector field generating the geodesic flow and $\widetilde E^{ss}$, $\widetilde E^{su}$ are the strong stable and strong unstable distributions, respectively. These distributions are known to be integrable, invariant under the differential of the geodesic flow, and to give rise to two transverse foliations of $T^1\widetilde M$, $\widetilde W^{ss}$ and $\widetilde W^{su}$, the strong stable and strong unstable foliations, respectively, whose leaves are smooth submanifolds. A classical property of these foliations is that in general they are transversally H\" older with exponent less than one.  A link between the two point of views on horospheres is the following. For $\tilde v\in T^1\widetilde M$, the strong unstable leaf $\widetilde W^{su}(\tilde v)$ through $\tilde v$ is defined to be  the set of unit vectors $\tilde w\in T^1\widetilde M$ which are normal to the horosphere $H_\xi (0)$ and pointing  outward,  that is, on the same side as $\tilde v$. For the sake of simplicity this horosphere defined by $\tilde v$ will be noted in the sequel by $H(\tilde v)$. Note that the unstable foliation being in general only Hölder  echoes the fact that the Busemann functions are only Hölder continuous in $\xi$; however, for any fixed $\xi$, we get a foliation of $\widetilde M$ by the whole family of horospheres centred at $\xi$ which is $C^\infty$-smooth as well as the Busemann function $B_\xi( .)$ as a function of $\tilde m\in \widetilde M$. More details are given in \cite{BCH}, the Introduction and Section 2.

In sum, when $(M, g)$ is a closed negatively curved manifold the horospheres are smooth and carry the Riemannian metric induced by $\tilde g$. From the previous comments we know that the geometric quantities related to a given horosphere $H(\tilde v)$, such as its curvature and its second fundamental form, are continuous on $T^1\widetilde M$ and smooth along the flow lines. Our assumptions, stated in the Theorem below,  are concerned  with the scalar curvature of the horospheres. We let  $\tilde s(\tilde v)$ denote the scalar curvature of $H(\tilde v)$ at the base point $\tilde p (\tilde v)$ of $\tilde v$. It is a continuous function defined on $T^1\widetilde M$. The natural projection $\pi: \widetilde M\rightarrow M$ maps $H(\tilde v)$ onto an immersed hypersurface in $M$ which is denoted, by abuse of language, by $H(v)$ where $m=\pi (\tilde m)$ and $v= d\pi (\tilde v)\in T^1_m(M)$. By invariance by the deck transformations, the scalar curvature $\tilde s (\cdot)$ descends to a smooth function defined on $T^1(M)$, and for $v\in T^1M$ we denote it by $s(v)$. We also let  $\mu_L$ be the Liouville measure on $T^1M$. Our main theorem is the following.
\begin{theorem}\label{main}
	Let $(M^n, g)$ be a closed Riemannian manifold of dimension $n\geq 3$ and of negative curvature. Let us assume that 
	\begin{itemize}
		\item[i)] either $\int_{T^1M}s(v)d\mu_L\geq 0$, or
		\item[ii)] there exists one horosphere, $H(v_0)$ for some $v_0\in T^1_{m_0}M$, such that $s(w)\geq 0$ for all $w\in T^1_mM$ with $m\in H(v_0)$ and $w$ orthogonal to $H(v_0)$ at $m$.
	\end{itemize}
	Then $(M, g)$ has constant negative sectional curvature.
\end{theorem}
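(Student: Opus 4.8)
The plan is to combine the Gauss equation of the horospheres with the Riccati equation satisfied by their shape operators, and then to integrate the resulting pointwise identity against the Liouville measure, exploiting its invariance under the geodesic flow.

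\textbf{Pointwise identities.} For $\tilde v\in T^1\widetilde M$ let $\xi\in\partial\widetilde M$ be the centre of the horosphere $H(\tilde v)$, so that $\tilde v=\nabla B_\xi$ at $\tilde p(\tilde v)$; let $S(\tilde v)$ be the shape operator of $H(\tilde v)$ with respect to $\tilde v$, acting on $\tilde v^{\perp}$, and set $u(\tilde v)=\trace S(\tilde v)$. Since the integral curves of $\nabla B_\xi$ are unit-speed geodesics, $S$ satisfies along the flow the Riccati equation $\dot S+S^2+R=0$ with $R(\,\cdot\,)=R(\,\cdot\,,\tilde v)\tilde v$ (here $\dot{}$ is the derivative in the flow direction), and taking the trace gives $\dot u=-\trace(S^2)-\Ric(\tilde v,\tilde v)$. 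The scalar-curvature Gauss equation for $H(\tilde v)\subset\widetilde M$ reads
\[
s(\tilde v)=\Scal(\tilde p(\tilde v))-2\Ric(\tilde v,\tilde v)+u(\tilde v)^2-\trace\!\bigl(S(\tilde v)^2\bigr),
\]
and by the discussion preceding the theorem $s$, $u$ and $\trace(S^2)$ descend to continuous functions on $T^1M$ that are smooth along the flow.

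\textbf{Integration and the key inequality.} Since $\mu_L$ is flow-invariant and $\dot u$ is the flow-derivative of the bounded continuous function $u$, one has $\int_{T^1M}\dot u\,d\mu_L=0$, hence $\int_{T^1M}\trace(S^2)\,d\mu_L=-\int_{T^1M}\Ric(v,v)\,d\mu_L$. Averaging $\Ric(v,v)$ over a fibre $T^1_mM$ gives $\tfrac1n\Scal(m)$, so integrating the Gauss equation and substituting yields
\[
\int_{T^1M}s\,d\mu_L=c_n\int_M\Scal\,d\mathrm{vol}_g+\int_{T^1M}u^2\,d\mu_L
\]
for an explicit positive constant $c_n$ (a multiple of $\tfrac{n-1}{n}$). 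On the other hand, Cauchy--Schwarz applied to the $n-1$ eigenvalues of $S$ gives the pointwise bound $u^2\le (n-1)\,\trace(S^2)$, whence $\int_{T^1M}u^2\,d\mu_L\le -c_n\int_M\Scal\,d\mathrm{vol}_g$. Combining the two displays, $\int_{T^1M}s\,d\mu_L\le 0$ \emph{unconditionally}, with equality if and only if $u^2=(n-1)\trace(S^2)$ holds $\mu_L$-almost everywhere, i.e.\ if and only if $S$ is umbilical $\mu_L$-a.e.\ on $T^1M$ — and therefore everywhere, since $u$ and $\trace(S^2)$ are continuous and $\mu_L$ has full support.

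\textbf{Conclusion of the proof.} Under hypothesis (i), the inequality forces $\int_{T^1M}s\,d\mu_L=0$, so every horosphere is totally umbilical: $S(\tilde v)=\lambda(\tilde v)\,\Id$ on $\tilde v^{\perp}$ with $\lambda$ continuous and smooth along the flow. Feeding this into the Riccati equation gives $R(X,\tilde v)\tilde v=-(\dot\lambda+\lambda^2)X$ for all $X\perp\tilde v$, i.e.\ all sectional curvatures of planes containing $\tilde v$ share the common value $\kappa(\tilde v):=-(\dot\lambda+\lambda^2)$. As every unit vector is the outward normal of some horosphere, any two linearly independent vectors at a point lie in a common plane, which forces $\kappa$ to be constant on each fibre; thus $(M,g)$ has pointwise constant sectional curvature, and since $n\ge 3$ Schur's lemma makes it a global constant, necessarily negative. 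Under hypothesis (ii), the set of $w\in T^1M$ orthogonal to $H(v_0)$ on its outward side is precisely the strong unstable leaf through $v_0$, which is dense in $T^1M$ because the strong unstable foliation of the geodesic flow of a closed negatively curved manifold is minimal; as $s$ is continuous and $\ge 0$ on this dense set, $s\ge 0$ on all of $T^1M$, so $\int_{T^1M}s\,d\mu_L\ge 0$ and we are reduced to case (i). There is no genuinely hard step: the content is the Gauss--Riccati--Liouville identity and its rigid equality case; the points needing care are the sign and normalization bookkeeping, and — only for (ii) — invoking the minimality of the strong unstable foliation.
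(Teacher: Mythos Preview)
Your proof is correct and follows essentially the same route as the paper's: the same Riccati and Gauss identities, the same integration against the flow-invariant Liouville measure, the same reduction of (ii) to (i) via minimality of the strong unstable foliation, and the same umbilicity-then-Schur conclusion. Your Cauchy--Schwarz bound $u^2\le(n-1)\trace(S^2)$ is literally the paper's Lemma~2.2, $(n-2)\sum\lambda_i^2\ge\sum_{i\ne j}\lambda_i\lambda_j$, in disguise; the only cosmetic difference is that you isolate the unconditional inequality $\int_{T^1M}s\,d\mu_L\le 0$, which the paper obtains implicitly in its equality chain.
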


\begin{corollary}
	Let $(M^n, g)$ be a closed Riemannian manifold of dimension $n\geq 3$ and of negative curvature. Let us assume that
	one horosphere, $H(\tilde v_0)$ for some $\tilde v_0\in T^1_{\tilde m_0}\widetilde M$, is flat for the Riemannian metric induced by $\tilde g$. Then $(M, g)$ has constant negative sectional curvature.
\end{corollary}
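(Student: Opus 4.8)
The plan is to obtain the Corollary as a direct consequence of Theorem~\ref{main}, via its hypothesis ii). The first step is to observe that flatness is a local metric property and is therefore inherited by quotients under isometric deck transformations. Concretely, I would set $m_0 = \pi(\tilde m_0)$ and $v_0 = d\pi(\tilde v_0) \in T^1_{m_0}M$, so that the immersed hypersurface $H(v_0) = \pi\bigl(H(\tilde v_0)\bigr)$ is traced in $M$ by the flat horosphere $H(\tilde v_0)$. Since $\pi$ restricts on $H(\tilde v_0)$ to a local isometry onto $H(v_0)$, the hypersurface $H(v_0)$ is itself flat; in particular its scalar curvature vanishes identically along it.

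Next I would match this with hypothesis ii). Let $m \in H(v_0)$ and let $w \in T^1_m M$ be the unit normal to $H(v_0)$ at $m$ pointing to the distinguished (outward) side. Lift to a point $\tilde m \in H(\tilde v_0)$ over $m$ and to the corresponding outward unit normal $\tilde w \in T^1_{\tilde m}\widetilde M$ of $H(\tilde v_0)$ at $\tilde m$. By the description of horospheres recalled in the Introduction, $\tilde w$ lies in the strong unstable leaf $\widetilde W^{su}(\tilde v_0)$, so $[c_{\tilde w}] = [c_{\tilde v_0}]$ and $\tilde p(\tilde w) = \tilde m \in H(\tilde v_0)$; hence the horosphere it determines is $H(\tilde w) = H(\tilde v_0)$. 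Consequently $\tilde s(\tilde w)$ is, by definition, the scalar curvature of $H(\tilde v_0)$ at $\tilde m$, which is $0$ by the first step. Passing back down, $s(w) = \tilde s(\tilde w) = 0 \geq 0$, and since $m$ and $w$ were arbitrary, hypothesis ii) of Theorem~\ref{main} is satisfied for the horosphere $H(v_0)$. Invoking Theorem~\ref{main} then gives that $(M, g)$ has constant negative sectional curvature, which is the assertion of the Corollary.

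The argument is essentially an unwinding of definitions, so I do not anticipate a genuine obstacle; the one point that does need care is the identity $H(\tilde w) = H(\tilde v_0)$ for unit vectors $\tilde w$ normal to $H(\tilde v_0)$ along $H(\tilde v_0)$, since it is precisely this fact that forces the intrinsic scalar curvature of the single flat horosphere to coincide with the relevant values of the function $s(\cdot)$ appearing in ii). (One may also note, as a consistency check, the remark in the Introduction that in real hyperbolic space every horosphere is flat, so that the conclusion of the Corollary is sharp: it is exactly the hyperbolic metrics among negatively curved ones that possess a flat horosphere.)
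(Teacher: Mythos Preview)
Your proposal is correct and matches the paper's intended approach: the Corollary is stated immediately after Theorem~\ref{main} without a separate proof, since a flat horosphere has identically zero intrinsic scalar curvature and therefore satisfies condition~ii). Your careful verification that $H(\tilde w)=H(\tilde v_0)$ for each outward unit normal $\tilde w$ along $H(\tilde v_0)$, so that the values $s(w)$ in~ii) really are the scalar curvatures of the given flat horosphere, is exactly the unwinding of definitions that makes the deduction rigorous.
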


In \cite{BCH}, we recently proved a rigidity result of a dynamical nature. The main result proved in this paper  is geometric and complements the study started while also opens a series of research questions. The main ingredient in the proof of Theorem \ref{main} is the Riccati equation satisfied by the shape operator of the horospheres. The use of Riccati equation in the study of global geometric properties of Riemannian manifolds is not new, cf. \cite{Green}, \cite{Knieper}, \cite{IS}, \cite{ISS}. For example, it is central in L. W. Green's proof that Riemannian manifolds without conjugate points have non positive total scalar curvature and zero total scalar curvature if and only if the metric is Euclidean.  More recently, M.~Itoh and H.~Satoh characterized the real space forms, i.e., the Cartan Hadamard manifolds of constant non positive sectional curvature, by the property that the horospheres are totally umbilic, i.e., have constant principal curvatures, \cite{IS}. In this latter work, the Riccati equation also plays an essential role. In \cite{ISS}, the authors also give various characterizations of the complex and quaternionic hyperbolic spaces. It is important to note that  \cite{Knieper}, \cite{IS} and \cite{ISS} provide characterizations of hyperbolic real space in term of the {\sl extrinsic} geometry of horospheres.

\section{Proof of the main theorem}

We recall that given $\tilde m\in \widetilde M$ and $\tilde v\in T^1_{\tilde m}\widetilde M$ there is a unique horosphere $H(\tilde v)\subset \widetilde M$, orthogonal to $\tilde v$ at $\tilde m$. When $(M, g)$ is a closed negatively curved manifold the horospheres are smooth and carry the Riemannian metric induced by $\tilde g$. Our assumptions are concerned with its scalar curvature, denoted by $s(\tilde v)$, which is a continuous function defined on $T^1\widetilde M$. The natural projection $\pi: \widetilde M\rightarrow M$ maps $H(\tilde v)$ to an immersed hypersurface in $M$.  By abuse of notation, it  will be denoted by $H(v)$,  where $m=\pi (\tilde m)$ and $v= d\pi (\tilde v)\in T^1_m(M)$. By invariance by the deck transformations, the scalar curvature descends to a continuous  function defined on $T^1M$, and for $v\in T^1M$ we denote it by $s(v)$. We also denote by $\mu_L$ the Liouville measure on $T^1M$. We now turn to the proof of our Main Theorem.

\smallskip
{\bf {\em Proof of Theorem~\ref{main}}.}

\smallskip
Assertion $ii)$ implies that the scalar curvature function, $s(\cdot)$, is non negative on the lift to $T^1(M)$ of one horosphere. We first note that since $(M, g)$ is {\sl negatively curved} $s(\cdot)$ is then non negative everywhere in $T^1M$. More precisely,

\begin{proposition}\label{pro:one-all}
Let $(M^n, g)$ be a closed Riemannian manifold of dimension $n\geq 3$ and of negative sectional curvature. If one horosphere has non negative scalar curvature then every horosphere has non negative scalar curvature. In particular assertion $ii)$ implies assertion  $i)$.
\end{proposition}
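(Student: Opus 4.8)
The key tool, as flagged in the introduction, is the Riccati equation for the shape operator of the horosphere foliation. Fix $\xi \in \partial\widetilde M$ and consider the family of horospheres $H_\xi(t)$. Along a geodesic $c$ asymptotic to $\xi$, parametrized so that $c(t) \in H_\xi(t)$, let $U(t)$ denote the shape operator (second fundamental form with respect to the induced metric) of $H_\xi(t)$ at $c(t)$, acting on the orthogonal complement of $\dot c(t)$. Then $U$ satisfies the Riccati equation $U' + U^2 + R = 0$, where $R = R(\cdot, \dot c)\dot c$ is the curvature operator along $c$. By the Gauss equation, the scalar curvature $s$ of the horosphere $H_\xi(t)$ at $c(t)$ can be written in terms of the ambient scalar curvature along the horosphere direction, the Ricci curvature $\Ric(\dot c, \dot c)$, and the quantities $(\trace U)^2 - \trace(U^2)$. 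So the plan is to derive a first-order ODE along $c$ for $s(c(t))$ (or for a suitable combination of $s$ and traces of powers of $U$), and then exploit that $c$ lies in a compact manifold after projection, so the flow line $\{c(t): t \in \R\}$ has compact closure in $T^1M$.

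**The main argument.**

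First I would compute, using the Riccati equation and its derivatives, an evolution equation of the schematic form $\frac{d}{dt}\, s(c(t)) = A(t)\, s(c(t)) + (\text{lower-order terms})$, or more precisely isolate the principal part so that the sign of $s$ along the geodesic is controlled. The crucial structural fact one expects: because the curvature is \emph{strictly negative}, $U$ is negative definite and uniformly bounded (its eigenvalues lie in a fixed interval bounded away from $0$, by compactness of $T^1M$ and strict negativity), hence $\trace U$ is negative and bounded away from $0$. The function $t \mapsto s(c(t))$ is smooth, and the evolution equation should force that if $s(c(t_0)) \geq 0$ at one point then $s(c(t)) \geq 0$ for all $t$ — or, dually, that $s$ cannot become negative without the ODE driving it to $-\infty$, which is impossible on a relatively compact orbit. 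Concretely: suppose $s(c(t_1)) < 0$ for some $t_1$; run the ODE backward (or forward) and use the sign of the coefficient $A(t)$ — tied to $\trace U < 0$ — to show $|s(c(t))|$ grows at least exponentially, contradicting that $s$ is a continuous function on the compact $T^1M$ and hence bounded. This handles one geodesic and its whole family of horospheres $H_\xi(t)$.

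**From one orbit to all of $T^1M$.**

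The hypothesis of Proposition~\ref{pro:one-all} gives $s(w) \geq 0$ for all $w \in T^1M$ orthogonal to a single horosphere $H(v_0)$ — equivalently, $s \geq 0$ on the strong unstable leaf $\widetilde W^{su}(\tilde v_0)$. The unstable leaf through $\tilde v_0$ consists of all unit vectors normal and outward to the \emph{same} horosphere $H_\xi(0)$; flowing this leaf under the geodesic flow sweeps out all the normals to all horospheres $H_\xi(t)$ centred at the same $\xi$, and the argument above propagates non-negativity of $s$ from $t=0$ to every $t$. Then I would invoke density: the strong unstable foliation of an Anosov geodesic flow is minimal — every leaf is dense in $T^1M$ (this holds for closed negatively curved manifolds). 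Since $s$ is continuous on $T^1M$ and non-negative on the dense set $\bigcup_t (\text{flow}_t \widetilde W^{su}(\tilde v_0))$ projected to $T^1M$, it is non-negative everywhere. That gives assertion $i)$, since $\int_{T^1M} s\, d\mu_L \geq 0$ follows immediately from $s \geq 0$.

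**The main obstacle.**

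The delicate point is the first step: getting from the Riccati equation to a usable scalar ODE for $s$ along the flow. The scalar curvature of the horosphere involves $(\trace U)^2 - \trace U^2$, which is \emph{not} simply expressible through $\trace U$ alone unless one also tracks $\trace U^2$ (and differentiating that brings in $\trace U^3$, etc.), so one must either find the right closed combination or argue more cleverly — perhaps using that the sign of the ``driving coefficient'' only depends on $\trace U$ and is robust under the extra terms, via a differential-inequality (maximum-principle-along-the-flow) argument rather than an exact ODE. Making that robust comparison rigorous on a non-compact orbit (whose closure is compact, but the orbit itself is a line) is where the real care is needed; the compactness of $T^1M$ providing uniform bounds on $U$ and on $s$ is what makes it go through.
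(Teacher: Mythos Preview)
Your proposal overcomplicates matters. The entire first half --- deriving an evolution equation for $s$ along geodesics via the Riccati equation and then arguing by a maximum principle that non-negativity propagates from $H_\xi(0)$ to all $H_\xi(t)$ --- is unnecessary, and you yourself flag it as the ``main obstacle'' without resolving it (and indeed the system does not close: differentiating $\trace(S^2)$ brings in $\trace(S^3)$, and so on). But you already state the one fact that does all the work: for the geodesic flow of a closed negatively curved manifold the strong unstable foliation is minimal, i.e.\ \emph{every single} strong unstable leaf is dense in $T^1M$. The hypothesis of the Proposition is precisely that $s \geq 0$ on the strong unstable leaf through $v_0$ (the set of outward unit normals to the one horosphere $H(v_0)$). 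Since that leaf is already dense and $s$ is continuous on the compact space $T^1M$, one gets $s \geq 0$ everywhere immediately --- no need to flow first, no need for the Riccati equation at this stage.

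This is exactly the paper's argument: it says in one sentence that the proof ``relies on the density of each horosphere viewed as a leaf of the unstable foliation in the unit tangent bundle $T^1M$'' and refers to the identical density argument in the companion paper \cite{BCH}. Your paragraph ``From one orbit to all of $T^1M$'' already contains the complete proof; the preceding Riccati discussion is a detour around an obstacle that does not exist.
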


The proof of this proposition relies on the density of each horosphere viewed as a leaf of the unstable foliation in the unit tangent bundle $T^1M$. The proof follows exactly the same lines as the one given for the second assertion of  Proposition 2.1 in \cite{BCH}. 

\smallskip

We now proceed to the proof of the main theorem. To $\tilde m \in  H(\tilde v)$  for $\tilde v\in T_{\tilde m}\widetilde M$ we associate $S:=S_{\tilde v}$ the shape operator of $H(\tilde v)$ as follows:  let $\tilde u\in T^1_{\tilde m}H(\tilde v)$, then
$$S: \tilde u\to\nabla_{\tilde u}\tilde N\in T^1_{\tilde m}H(\tilde v) ,$$
where $\tilde N$ is the unit normal vector field of $H(\tilde v)$ (pointing outward). Note that some authors choose another possible convention for the shape operator, this one is chosen so that it is positive on a standard Euclidean sphere. Since the metric on $\tilde M$ is negatively curved, the Busemann functions are convex, hence, $S$ is a non negative symmetric operator acting on the tangent space to this horosphere at $\tilde m$. The shape operator  is a $(1,1)$-tensor (see \cite{Eschenburg}) and we can also view it as the second fundamental form of $H(\tilde v)$ at $\tilde v$, which is a $(2,0)$-tensor.  Henceforth, we will make the following abuse of language
$$\forall \tilde u, \tilde w\in T_{\tilde m}H(\tilde v),\quad \langle S(\tilde u), \tilde w \rangle = S(\tilde u, \tilde w)\,. $$
The shape operator is also defined by using Jacobi Fields (see \cite{Knieper} page 372, where it is called $U^+$ or \cite{Eschenburg} page 4, where it is called $U$). Let c(t) denote the (infinite) geodesic tangent to $\tilde v=\dot c(0)$ at $\tilde m=c(0)$, which is therefore orthogonal to $H(\tilde v)$. It follows  that $S$ is smooth along the geodesic $c(t)$ and it satisfies the following Riccati equation 
\begin{equation}\label{eq:Riccati}
\dot S+S^2+ R_{\dot c}=0\,,
\end{equation}
which is derived from the Jacobi equation. In this equation, $\dot S$ denotes  the covariant derivative, $\nabla_{\dot c}S$, of $S$ along $c(t)$ and $R_{\dot c}$ is the curvature operator defined, for $\tilde u, \tilde w\in T_{\tilde m}H(\tilde v)$, by 
\begin{equation}\label{curv-op}
(\tilde u, \tilde w)\to R_{\dot c}(\tilde{u},\tilde{w})= \langle R(\tilde u, \dot c)\dot c, \tilde w\rangle,
\end{equation}
where $R$ is the curvature tensor. Notice that 
if $\tilde u=\tilde w$, the expression in (\ref{curv-op}) is the sectional curvature of the $2$-plane generated by $\dot c$ and $\tilde u$. Taking the trace of these symmetric operators we get functions depending on $\tilde v$ and hence defined on $T^1\widetilde M$. Furthermore, by invariance we can consider them as functions defined on $T^1M$ and which depend on $v=d\pi (\tilde v)$.

it is known that  the following equation holds (see, for example, \cite{Knieper})
\begin{equation}\label{eq:Riccati2}
X.{\trace}(S)+{\trace}(S^2)+\Ric(v)=0.
\end{equation}
Here, $\Ric (v)$ is the Ricci curvature considered as a quadratic form applied to $v$ and $X$ is the geodesic flow vector field on $T^1M$. Note that in  (\ref{eq:Riccati}) the derivative of $S$ uses the connection of the Riemannian metric of $\widetilde M$ whereas in (\ref{eq:Riccati2}) the derivative of the function $\trace(S)$ is taken along the geodesic $c(t)$. 
Recall that $d\mu_L$, the Liouville measure on $T^1M$, is invariant by the geodesic flow, hence
\begin{equation}
\label{eq:int}
\int_{T^1M}(X.\trace (S))d\mu_L=0.
\end{equation}
\label{eq:leftwith}
After integration we are left with,
\begin{equation}
\int_{T^1M}\trace (S^2)(v)d\mu_L(v)+\int_{T^1M}\Ric (v)d\mu_L(v)=0.
\end{equation}
The second term can be integrated using Fubini's decomposition, the integral of the quadratic form $\Ric$ on the unit sphere at a point is equal to its trace, $\Scal$     (the scalar curvature of $M$) divided by $n$, the dimension of $M$. Hence, with $p: T^1M\to M$ denoting the natural projection we have that,
\begin{equation}\label{eq:Integrated}
\int_{T^1M}\trace (S^2)(v)d\mu_L(v)=-\frac{1}{n}\int_{T^1M}\Scal \circ p(v) d\mu_L(v).
\end{equation}
Finally, The Gauss equation for $H(\tilde v)$ reads (see \cite{Eschenburg} page 7, \cite{Knieper} page 376),
\begin{equation}\label{eq:Gauss}
\trace (S(\tilde v))^2-\trace (S^2(\tilde v))=2\Ric (\tilde v) - \Scal\circ p(\tilde v)+ s(\tilde v),
\end{equation}

Let us denote by $(\lambda_1,\dots , \lambda_{n-1})$, the principal curvatures of $H(\tilde v)$, they are functions defined at every point of $H(\tilde v)$ and for the sake of simplicity we omit to mention it. The computations below are done at every point of the horosphere. The Gauss equation (\ref{eq:Gauss}) becomes,
$$\big(\sum_1^{n-1}\lambda_i\big)^2-\sum_1^{n-1}\lambda_i^2=\sum_{i\ne j}\lambda_i\lambda_j=2\Ric-\Scal\circ p+s.$$
On the other hand we have,
\begin{lemma}\label{lem:SumProduct}
    $$\sum_1^{n-1}\lambda_i^2\geq\frac{1}{n-2}\sum_{j\ne i}\lambda_i\lambda_j.$$
\end{lemma}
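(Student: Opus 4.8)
The plan is to reduce the inequality to the Cauchy--Schwarz inequality applied to the $(n-1)$-tuple of principal curvatures. Write $p_1=\sum_{i=1}^{n-1}\lambda_i$ and $p_2=\sum_{i=1}^{n-1}\lambda_i^2$. The first step is the elementary identity
\[
\sum_{i\ne j}\lambda_i\lambda_j=\Big(\sum_{i=1}^{n-1}\lambda_i\Big)^2-\sum_{i=1}^{n-1}\lambda_i^2=p_1^2-p_2 ,
\]
so that the asserted inequality $p_2\ge\frac{1}{n-2}\sum_{i\ne j}\lambda_i\lambda_j$ is equivalent to $(n-2)\,p_2\ge p_1^2-p_2$, i.e.\ to $(n-1)\,p_2\ge p_1^2$.

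The second step is to recognize $(n-1)\,p_2\ge p_1^2$ as exactly the Cauchy--Schwarz inequality $\big(\sum_{i=1}^{n-1}1\cdot\lambda_i\big)^2\le\big(\sum_{i=1}^{n-1}1\big)\big(\sum_{i=1}^{n-1}\lambda_i^2\big)$ for the $n-1$ real numbers $\lambda_1,\dots,\lambda_{n-1}$ (equivalently, the inequality between the arithmetic and quadratic means of these numbers). Undoing the first step then gives $\sum_{i\ne j}\lambda_i\lambda_j\le(n-2)\,p_2$, and dividing by $n-2$ yields Lemma~\ref{lem:SumProduct}. The division is legitimate precisely because $n\ge 3$, which is the standing hypothesis of the theorem; this is the only place the dimension restriction intervenes in the lemma.

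There is essentially no analytic obstacle here: the argument is two lines of linear algebra, the only subtlety being bookkeeping the identity $\sum_{i\ne j}\lambda_i\lambda_j=p_1^2-p_2$ and the fact that $n-2>0$. The point worth emphasizing, with a view to the rigidity statement, is the equality case: equality holds in Cauchy--Schwarz, hence in Lemma~\ref{lem:SumProduct}, if and only if $\lambda_1=\dots=\lambda_{n-1}$, i.e.\ if and only if the horosphere $H(\tilde v)$ is totally umbilic at the point under consideration. I would record this explicitly, since it is what will force constant sectional curvature once Lemma~\ref{lem:SumProduct} is combined with the Gauss equation (\ref{eq:Gauss}) and the integrated identity (\ref{eq:Integrated}).
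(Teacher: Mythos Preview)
Your proof is correct and essentially the same as the paper's: the paper writes $(n-2)\sum_i\lambda_i^2=\sum_{j>i}(\lambda_i^2+\lambda_j^2)\ge\sum_{j>i}2\lambda_i\lambda_j=\sum_{i\ne j}\lambda_i\lambda_j$ by summing the pairwise inequalities $(\lambda_i-\lambda_j)^2\ge0$, which is exactly the unpacked form of your Cauchy--Schwarz step $(n-1)p_2\ge p_1^2$. Your explicit remark on the equality case (all $\lambda_i$ equal, i.e.\ umbilicity) is precisely what the paper uses immediately after the lemma.
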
  
\begin{proof}[Proof of Lemma \ref{lem:SumProduct}]
$$(n-2)\sum_1^{n-1}\lambda_i^2=\sum_{j>i}(\lambda_i^2+\lambda_j^2)\geq \sum_{j>i}(2\lambda_i\lambda_j)=\sum_{i\ne j}\lambda_i\lambda_j.$$
Indeed, in the term $\sum_{j>i}(\lambda_i^2+\lambda_j^2)$ a given index $i$ appears once for each $j>i$ but also once for each $j<i$ by considering $(\lambda_i^2+\lambda_j^2)$, hence $(n-2)$ times.\\
\end{proof}
As a consequence of Lemma \ref{lem:SumProduct} we get,
\begin{equation}\label{eq:SumProduct}
  \trace(S^2)=\sum_1^{n-1}\lambda_i^2\geq\frac{1}{n-2}\sum_{i\ne j}\lambda_i\lambda_j=\frac{1}{n-2}\big(2\Ric-\Scal\circ p+s\big).
\end{equation}
Using any of assumptions of the theorem, we have that $\int_{T^1M}s(v)d\mu_L\geq 0$, and plugging inequality (\ref{eq:SumProduct}) in (\ref{eq:Integrated}) gives that, 

\begin{equation}\label{eq:long}
\begin{split}
	-\frac{1}{n}\int_{T^1M}\Scal \circ p (v) d\mu_L(v)\geq &\frac{1}{n-2}(\frac{2}{n}-1)\int_{T^1M}\Scal \circ p (v) d\mu_L(v)\\
	=& -\frac{1}{n}\int_{T^1M}\Scal \circ p (v) d\mu_L(v).
\end{split}
\end{equation}

Hence, one has equality in (\ref{eq:SumProduct}) which is a superposition of inequalities of the type 
$\lambda_i^2+\lambda_j^2\geq 2\lambda_i\lambda_j$ for $i\ne j$
and then for any couple $(\lambda_i, \lambda_j)$ with $i\ne j$ we have the equality $\lambda_i=\lambda_j$, which shows that every horosphere is umbilical at every point. Notice that, since the shape operators of the horospheres are non negative, all the $\lambda_i$'s are non negative. 

The conclusion of the previous arguments is that under our assumptions $S$ is a multiple of the identity at every point with a coefficient which a priori depends on the vector $\tilde v$. Then the Riccati equation (\ref{eq:Riccati}) shows that $R_{\dot c}$ is also a multiple of the identity. Indeed, we have that,
\begin{lemma}\label{lem:deriv}
  Let us assume that $S=a(\tilde v).id$ then $\nabla_{\dot c}S= da (\dot c).id$
\end{lemma}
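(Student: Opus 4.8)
The plan is to prove Lemma \ref{lem:deriv} by a direct computation using the fact that the covariant derivative $\nabla_{\dot c}$ along $c(t)$ is a derivation that commutes with contractions and annihilates the metric. First I would clarify the setup: along the geodesic $c(t)$ we have a smooth family of symmetric $(1,1)$-tensors $S(t) = S_{\dot c(t)}$ acting on $T_{c(t)}H(\dot c(t))$, and the hypothesis is that $S(t) = a(t)\,\mathrm{id}$ where $a(t) = a(\dot c(t))$ is a smooth scalar function along $c$ (note $a$ is defined globally on $T^1\widetilde M$, but only its restriction to the orbit of the geodesic flow through $\tilde v$ matters for the covariant derivative along $c$).

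The key step is simply to differentiate the relation $S = a \cdot \mathrm{id}$. Since $\nabla_{\dot c}$ is a tensor derivation and the identity endomorphism field is parallel (it is built from the metric and its inverse, both parallel, or more directly: $\nabla_{\dot c}(\mathrm{id})(\tilde u) = \nabla_{\dot c}(\mathrm{id}(\tilde u)) - \mathrm{id}(\nabla_{\dot c}\tilde u) = \nabla_{\dot c}\tilde u - \nabla_{\dot c}\tilde u = 0$ for any vector field $\tilde u$ along $c$), the Leibniz rule gives
\begin{equation*}
\nabla_{\dot c}S = \nabla_{\dot c}(a\cdot \mathrm{id}) = (\dot c \cdot a)\,\mathrm{id} + a\,\nabla_{\dot c}(\mathrm{id}) = (\dot c \cdot a)\,\mathrm{id} = da(\dot c)\,\mathrm{id}.
\end{equation*}
Here $\dot c \cdot a = \frac{d}{dt}\big(a(c(t),\dot c(t))\big) = da(\dot c)$ in the notation of the paper, where $\dot c$ is regarded as the geodesic flow vector field $X$ lifted to $T^1\widetilde M$. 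This is the whole content of the lemma.

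I expect there is essentially no obstacle here; the only point requiring a word of care is the meaning of $da(\dot c)$, namely that we are differentiating $a$ along the orbit of the geodesic flow and not in arbitrary directions, so no smoothness of $a$ transverse to the flow is needed — consistent with the earlier remark that horosphere data is smooth along flow lines but only continuous on $T^1\widetilde M$ in general. One should also remark that since $S(t)$ acts on the moving subspace $T_{c(t)}H(\dot c(t))$ rather than on a fixed vector space, the statement $\nabla_{\dot c}\mathrm{id} = 0$ should be understood with the horospherical subbundle along $c$ being parallel — which holds because this subbundle is exactly $\dot c(t)^{\perp}$ and $\nabla_{\dot c}\dot c = 0$, so the orthogonal complement is parallel along $c$. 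With that understood, the displayed identity is immediate and the lemma follows; combined with the Riccati equation \eqref{eq:Riccati}, one concludes $R_{\dot c} = -(\dot c \cdot a + a^2)\,\mathrm{id}$, i.e. $R_{\dot c}$ is a multiple of the identity, which is the sought consequence.
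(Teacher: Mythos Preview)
Your proof is correct and is essentially the same as the paper's: the paper extends $\tilde u,\tilde w$ to parallel fields $U,W$ along $c$ and differentiates $S(U,W)=a(\dot c(t))\langle U,W\rangle$, which is exactly your Leibniz-rule computation using $\nabla_{\dot c}\mathrm{id}=0$, written in the $(2,0)$-tensor picture instead of the $(1,1)$-tensor picture. Your remark that the normal bundle $\dot c^{\perp}$ is parallel along $c$ is precisely the paper's observation that the parallel extensions $U(t),W(t)$ remain tangent to $H(\dot c(t))$.
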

\begin{proof}[Proof of Lemma \ref{lem:deriv}]
Let $\tilde u, \tilde w \in T_{p(\tilde v)}H(\tilde v)$. We extend these vectors by parallel transport into vector fields along $c$ denoted by $U(t)$ and $W(t)$. They remain orthogonal to $\dot c(t)$ hence tangent to the horosphere $H(\dot c(t))$. We then compute the derivative in $t$ of $S(U, W)=a(c(t)) \langle U(t), W(t)\rangle$ and the result follows.
\end{proof}
This shows that $\langle R(\tilde u, \dot c)\dot c,  \tilde u\rangle$ does not depend on the (unit) vector $\tilde u\in T^1_mH(\tilde v)$, that is a vector orthogonal to $\tilde v=\dot c(0)$. The sectional curvature of $\widetilde M$ at $\tilde m$ is thus the same on every $2$-plane $(\tilde u, \tilde v)$. It remains to compute its values on the $2$-planes in $T_{\tilde m}H(\tilde v)$ (orthogonal to $\tilde v$) of the type $(\tilde u, \tilde w)$ with $\tilde w\in T^1_{\tilde m}H(\tilde v)$ and orthogonal to $\tilde u$ (and $\tilde v$ by construction). We can then work similarly with the horosphere $H(\tilde u)$ to which $\tilde v$ and $\tilde u$ are tangent and conclude that the sectional curvature takes the same value on the $2$-planes $(\tilde u, \tilde v)$ and $(\tilde u, \tilde w)$. this shows that the sectional curvature is constant (on every tangent $2$-planes) at any $\tilde m\in \widetilde M$. But Schur's Lemma (see \cite{Petersen}, Lemma 2.4 p. 32) asserts that the sectional curvatures is then constant on the whole manifold since $n\geq 3$.
\eop{Theorem~\ref{main}}

\smallskip

\begin{remark}
The above proof will still work if  $(M,g)$ is {\sl non negatively} curved under the assumption that curvature of the horospheres  can be defined. This requires these submanifolds to be $C^3$-smooth so that the induced metric on their tangent space has second derivatives. Unfortunately, at the moment it seems only known that the horosphere in this more general situation are $C^2$. Negative curvature insures that they are smooth.
\end{remark}

\bigskip
\bigskip

\end{document}